\documentclass[a4paper,11pt]{article}

\usepackage{geometry}
\usepackage{lmodern}

\usepackage{amsmath}
\usepackage{amssymb}
\usepackage{amsthm}
\usepackage{mathtools}
\usepackage{thmtools}
\allowdisplaybreaks           

\usepackage{graphicx}
\usepackage{tikz}
\usetikzlibrary{arrows.meta,calc,positioning,decorations.pathreplacing,shapes}
\usepackage{caption}
\usepackage{subcaption}

\usepackage[T1]{fontenc}
\usepackage{microtype}
\usepackage[dvipsnames]{xcolor}
\usepackage[shortlabels]{enumitem}

\usepackage[colorlinks=true,linkcolor=BrickRed,citecolor=RoyalBlue,%
            urlcolor=RoyalBlue,linktocpage=true]{hyperref}
\usepackage[noabbrev,capitalize]{cleveref}
\crefname{equation}{}{}       
\Crefname{equation}{}{}

\newcommand{\abs}[1]{\left\lvert#1\right\rvert}

\newcommand{\Vtwo}{V_2}                      
\newcommand{\Vthree}{V_3}                     
\newcommand{\symdiff}{\mathbin{\triangle}}    
\newcommand{\dcup}{\mathbin{\dot\cup}}         
\newcommand{\empt}{\varnothing}                
\DeclareMathOperator{\cyc}{\rho}               

\declaretheorem[name=Theorem,numberwithin=section]{theorem}
\declaretheorem[name=Lemma,sibling=theorem]{lemma}

\declaretheorem[name=Conjecture,sibling=theorem]{conjecture}

\declaretheorem[style=remark,name=Remark,sibling=theorem]{remark}
\declaretheorem[style=definition,name=Question,sibling=theorem]{question}

\numberwithin{equation}{section}

\tikzset{
  cvertex/.style = {circle, draw=black, fill=black, inner sep=0pt, minimum size=1.6mm},
  tvertex/.style = {circle, draw=black, fill=white, line width=0.7pt, inner sep=0pt, minimum size=2.2mm},
  nvertex/.style = {circle, draw=black, fill=black!12, inner sep=0pt, minimum size=2.0mm},
  gedge/.style   = {line width=0.8pt},
  medge/.style   = {line width=1.7pt, line cap=round},
  redge/.style   = {line width=0.8pt, dashed},
  glabel/.style  = {font=\footnotesize},
}
\tikzset{>=latex}

\newcommand{\transarrow}{\ensuremath{\longrightarrow}}

\title{Matching complements in subcubic graphs\\[2pt]
        and a proof of the 3-Decomposition Conjecture}

\author{Jicheng Ma\thanks{Corresponding author. School of Mathematics, Renmin
University of China, Beijing, 100872, China. Email:
\texttt{mjc191812@ruc.edu.cn}.}}

\date{}

\begin{document}

\maketitle

\begin{abstract}
  We prove the $3$-Decomposition Conjecture: every finite connected cubic
loopless multigraph decomposes into a spanning tree, a $2$-regular subgraph, and
a matching. The proof rests on a new theorem on matching complements in
subcubic graphs. Let $H$ be a finite connected bridgeless simple graph of
maximum degree three, and let $S$ be its set of degree-two vertices, with
$\abs{S}=k\ge 2$. We show that $H$ has a matching of size equal to its
cyclomatic number, $\abs{E(H)}-\abs{V(H)}+1$, whose deletion leaves a single
tree containing all of $S$, together with cycles disjoint from $S$. The proof is
by induction on $k$, using an alternating-path exchange that stops at the first
entry into the growing tree component.

\end{abstract}

\noindent\textbf{Keywords.} 3-Decomposition Conjecture, cubic graph, graph
decomposition, spanning tree, matching, perfect matching.

\smallskip

\noindent\textbf{Mathematics Subject Classification (2020).} 05C70, 05C05.

\bigskip

\section{Introduction}\label{sec:intro}

A \emph{$3$-decomposition} of a graph $G$ is a partition of its edge set into a
spanning tree, a $2$-regular subgraph, and a matching. By Petersen's
theorem~\cite{Petersen1891}, every bridgeless cubic graph has a perfect matching,
and hence a $2$-factor. Malkevitch~\cite{Malkevitch1979} studied which cubic
graphs decompose into a spanning tree and a $2$-regular subgraph; for a cubic
graph this is equivalent to having a homeomorphically irreducible spanning tree,
that is, a spanning tree with no vertex of degree two. Allowing a matching as a
third part, Hoffmann-Ostenhof~\cite{HoffmannOstenhof2011} conjectured in 2011
that such a decomposition always exists.

\begin{conjecture}[3-Decomposition Conjecture~\cite{HoffmannOstenhof2011}]\label{conj:3dc}
  Every finite connected cubic loopless multigraph has an edge partition into a
  spanning tree, a $2$-regular subgraph, and a matching.
\end{conjecture}

Despite its elementary statement, the conjecture has been open for over a decade and has
become one of the central open problems on edge decompositions of cubic graphs.
It is equivalent, by Hoffmann-Ostenhof, Kaiser, and
Ozeki~\cite{HoffmannOstenhofKaiserOzeki2018}, to the \emph{$2$-Decomposition
Conjecture} (2DC): every connected loopless multigraph with all degrees two or
three, in which every cycle is separating, splits into a spanning tree and a
matching. Botler, Jim\'enez, Sambinelli, and
Wakabayashi~\cite{BotlerJimenezSambinelliWakabayashi2024} established strong
structural restrictions on a minimum counterexample to 2DC, proving in particular
that it must be simple and $2$-edge-connected.

Several results relax the matching to a union of vertex-disjoint paths of length
at most two. Hong, Liu, and Yu~\cite{HongLiuYu2020}, Li and
Cui~\cite{LiCui2014}, and Lyngsie and Merker~\cite{LyngsieMerker2019}
independently established such a decomposition for every connected cubic graph,
the last as a specialization of a tree, even graph, and star forest
decomposition. Fan and Zhou~\cite{FanZhou2025} then bounded the number of
length-two paths by $(n-4)/6$, and Fan, Guo, and Zhou~\cite{FanGuoZhou2026}
improved this to $(n-4)/8$; 3DC is exactly the assertion that this count can be
made zero. Zhang and Szeider~\cite{ZhangSzeider2025} verified the conjecture by
computer for all relevant graphs up to $28$ vertices.

The conjecture has also been confirmed for several families of graphs. The
planar case was settled by Hoffmann-Ostenhof, Kaiser, and
Ozeki~\cite{HoffmannOstenhofKaiserOzeki2018}, following the $3$-connected planar
case of Ozeki and Ye~\cite{OzekiYe2016}; it is known for traceable cubic
graphs~\cite{AbdolhosseiniAkbariHashemiMoradian2016, LiuLi2020}, for claw-free
or $4$-chordal subcubic graphs~\cite{AboomahigirAhanjidehAkbari2018}, for cubic
graphs having a $2$-factor consisting of three cycles~\cite{XieZhouZhou2020},
and for $3$-connected star-like graphs~\cite{BachtlerKrumke2022}. Bachtler and
Heinrich~\cite{BachtlerHeinrich2023} identified several reducible configurations
and proved the conjecture for $3$-connected cubic graphs of path-width at most
four. The surrounding matching-decycling problems were studied by Protti and
Souza~\cite{ProttiSouza2018}. The full conjecture has remained open.

In this paper we prove it.

\begin{theorem}\label{thm:3dc-resolved}
  The $3$-Decomposition Conjecture holds.
\end{theorem}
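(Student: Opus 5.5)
The plan is to combine the known reductions with the matching-complement theorem announced in the abstract, which I will call the Main Theorem. That theorem says: if $H$ is a finite connected bridgeless simple graph of maximum degree three whose set $S$ of degree-two vertices has $\abs{S}=k\ge 2$, then $H$ has a matching $M$ with $\abs{M}=\abs{E(H)}-\abs{V(H)}+1$ such that $H-M$ is a single tree containing $S$ together with cycles disjoint from $S$.

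First I use the equivalence of Hoffmann-Ostenhof, Kaiser and Ozeki~\cite{HoffmannOstenhofKaiserOzeki2018} between 3DC and the 2-Decomposition Conjecture. It therefore suffices to show that every connected loopless multigraph $G$ with all degrees in $\{2,3\}$, in which every cycle is separating, splits into a spanning tree and a matching. Take a minimum counterexample $G$. By Botler, Jim\'enez, Sambinelli and Wakabayashi~\cite{BotlerJimenezSambinelliWakabayashi2024}, $G$ is simple and $2$-edge-connected, hence bridgeless, with maximum degree three.

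If $G$ had no degree-two vertices, it would be cubic. But in a cubic graph no cycle is separating: take a $2$-factor, and any cycle $C$ in it is non-separating, because every other vertex reaches $C$ through the rest of the graph via the perfect matching complement. Hence $\abs{S}\ge 1$.

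The case $\abs{S}=1$ I would dispose of by a parity argument. The degree sum $3(n-1)+2$ must be even, so $n$ is odd. Then $\abs{E}=(3n-1)/2$, and this is an integer, so that alone gives no contradiction. Instead I would handle $\abs{S}=1$ by subdividing an edge, or by splitting off the degree-two vertex $v$: replace $v$ and its neighbours $a,b$ by the edge $ab$. This gives a cubic graph which is still bridgeless, so its cycles are non-separating. A nonseparating cycle of that graph lifts to a nonseparating cycle of $G$, contradicting the hypothesis. So $k\ge2$, and the Main Theorem applies to $H=G$.

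Let $M$ be the resulting matching, so $G-M=T\cup C_1\cup\dots\cup C_r$, with $T$ a tree containing $S$ and each $C_i$ a cycle avoiding $S$. I claim $r=0$. Each $C_i$ consists of cubic vertices, each with exactly one $M$-edge leaving it. Since $G$ has every cycle separating, $G-V(C_i)$ is disconnected. On the other hand, all remaining cycles $C_j$ and $T$ are linked to the rest of the graph by $M$-edges. I would argue that if the tree $T$ together with the other cycles stays connected after removing $C_i$, we contradict separability.

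The obstacle is that the $M$-edges might connect some $C_j$ only through $C_i$. To fix this I would instead choose $M$ to minimise $r$ and use the exchange argument from the Main Theorem's proof: take a separating cycle $C_i$, consider a component $D$ of $G-V(C_i)$ not containing $T$, and swap along $C_i$ to merge it into $T$. If a residual cycle remains, the cleaner route is to move its vertices into $T$. This works because $T$ plus the $C_i$ plus the matching has exactly the right count: $\abs{M}$ equals the cyclomatic number, so $T\cup\bigcup C_i$ has $\abs{V}-1$ edges, while each $C_i$ contributes one cycle. Hence $T$ has $\abs{V(T)}-1$ edges and $r$ extra cycles force $\abs{E(T)}=\abs{V}-1-\sum\abs{C_i}$, so $T$ is not spanning.

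Thus the real content is showing $r=0$ under the separating hypothesis, which I expect to be the main difficulty. I would try to prove it by taking an $M$-edge $xy$ with $x\in C_i$ and replacing it by a cycle edge at $x$, which re-routes $C_i$ into $T$. Once $r=0$, $T\cup M$ is the required spanning tree plus matching decomposition of $G$, contradicting the choice of $G$ and proving Theorem~\ref{thm:3dc-resolved}.
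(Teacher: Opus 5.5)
\textbf{Gap: the step from the matching-complement theorem to a fragile graph.} Your overall chain is the paper's: reduce to 2DC, take a Botler--Jim\'enez--Sambinelli--Wakabayashi minimal counterexample (simple and bridgeless), apply the matching-complement theorem, and show there are no residual cycles. Citing the Hoffmann-Ostenhof--Kaiser--Ozeki equivalence for the last step is fine. The gap is the step that turns the matching-complement theorem into a decomposition of a fragile graph. You correctly identify $r=0$ as the crux, but you do not prove it.

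Neither of your attempts works:
\begin{itemize}
\item A residual cycle $C_i$ need not itself be non-separating. If two of its vertices are joined by an $M$-chord, that chord becomes an isolated edge of $G-E(C_i)$.
\item Your exchange, replacing an $M$-edge $xy$ at $x\in C_i$ by a cycle edge $xz$, does not give a matching. Every vertex of a residual cycle is a covered cubic vertex, so $z$ already carries its own $M$-edge.
\item Separating refers to $G-E(C)$, not $G-V(C)$.
\end{itemize}

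The missing idea is the removable-cycle lemma of Borse and Waphare. Let $Q$ be a connected subgraph, suppose $G$ has a cycle edge-disjoint from $Q$, and suppose every vertex outside $Q$ has degree at least three. Then some cycle $C$ edge-disjoint from $Q$ has $G-E(C)$ connected. Take $Q=T$; the degree condition holds because all degree-two vertices lie in $T$. Any residual cycle then produces a non-separating cycle, contradicting fragility. No modification of $M$ is needed.

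\textbf{The cases $k\in\{0,1\}$ have the same gap.} Several of your claims there are false:
\begin{itemize}
\item A cycle of a $2$-factor of a cubic graph need not be non-separating. In $K_4$ with a Hamiltonian $2$-factor, deleting that cycle leaves a perfect matching.
\item A bridgeless cubic graph need not have all its cycles non-separating.
\item In the $k=1$ case, a cycle through the new edge $ab$ lifts to a cycle through $v$. Deleting its edges isolates $v$, so that lift is always separating.
\end{itemize}
What you need is a non-separating cycle avoiding a prescribed vertex. The paper gets this from the same lemma with $Q=\{v\}$, after a cyclomatic count shows $G-v$ contains a cycle. With this one lemma, the cases $k=0$, $k=1$ and the residual cycles for $k\ge 2$ all close the same way.
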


Rather than working with cubic graphs directly, we establish a new theorem on
matching complements in subcubic graphs, and deduce 3DC from it in two steps.
The matching argument uses the theorem of Petersen and Sch\"onberger that a
bridgeless cubic graph has a perfect matching through any prescribed edge: mere
existence of a perfect matching gives the one we need in the auxiliary cubic
graph, while the prescribed edge is used in the base case. The
passage to 2DC uses the elegant minimum-counterexample structure theorem of
Botler, Jim\'enez, Sambinelli, and
Wakabayashi~\cite{BotlerJimenezSambinelliWakabayashi2024}.

Call a graph in which every vertex has degree two or three a
\emph{$\{2,3\}$-graph}, and a degree-two vertex of such a graph a
\emph{terminal}; write $\Vtwo(H)$ for its set of terminals and $\Vthree(H)$ for
its set of degree-three vertices.

\begin{theorem}\label{thm:terminal-selection}
  Let $H$ be a finite connected bridgeless simple graph, every vertex of which
  has degree two or three. If $S=\Vtwo(H)$ with $\abs{S}=k\ge 2$, then $H$ has a
  matching $M$ of size
  \begin{equation}\label{eq:main-size}
    \abs{M}=q:=\frac{\abs{V(H)}-k+2}{2}
  \end{equation}
  such that $H-M$ is the disjoint union of one tree containing all vertices of
  $S$ and zero or more cycles.
\end{theorem}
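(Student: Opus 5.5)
The proof goes by induction on $k=\abs{S}$, using the counting fact that $q$ equals the cyclomatic number. With $n=\abs{V(H)}$ we have $2\abs{E(H)}=3n-k$, so $\abs{E(H)}-n+1=(n-k+2)/2=q$. Since $H-M$ has $\abs{E(H)}-q=n-1$ edges, the required conclusion is equivalent to the following: $M$ is a matching of size $q$ such that $H-M$ has exactly one acyclic component, that component contains $S$, and every other component is a cycle. Note that cycles in $H-M$ avoid $S$ automatically: a terminal unmatched by $M$ keeps degree $2$, and it must lie in the tree anyway.

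So the target is a matching $M$ with three properties. First, $M$ covers no terminal, or covers terminals in a controlled way. Second, $M$ covers every degree-three vertex outside the tree component $T$, so that those vertices have degree $2$ in $H-M$. Third, the components of $H-M$ other than $T$ are cycles.

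\textbf{Base case $k=2$.} Let $S=\{s,t\}$. Suppress $s$ if its neighbours are nonadjacent; otherwise handle it by a small gadget. A cleaner route is to add a new edge $st$. If $s$ and $t$ are already adjacent, use a subdivided parallel edge instead. This gives a bridgeless cubic graph $G$, possibly a multigraph. By Petersen–Schönberger, $G$ has a perfect matching $F$ containing $st$. Put $M=F\setminus\{st\}$. Then $\abs{M}=(n-2)/2=q$, and $H-M$ is $(G-F)$ with the edge $st$ removed. Here $G-F$ is a $2$-factor, and $s,t$ lie on cycles of it not using $st$. This is problematic, because $s$ and $t$ may lie on different cycles of $G-F$.

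I would repair this by an alternating exchange. Take an $F$-alternating path or cycle switch that merges the cycle through $s$ with other cycles, each switch opening one cycle into the growing path component. Repeat until $H-M$ has the tree $T$ containing $s,t$ plus cycles. Alternatively, I would choose the auxiliary graph so that the prescribed edge forces $s$ and $t$ onto a common structure. For example, identify $s$ and $t$ into a single degree-four vertex and split it appropriately, so that a single cycle of the $2$-factor through the prescribed edge becomes, after deletion, a path containing both.

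\textbf{Induction step $k\ge 3$.} Pick two terminals $s,s'$ and join them through a new degree-three vertex $x$ adjacent to $s$ and $s'$. Then $x$ would itself be a terminal, so this does not directly reduce $k$. Better is to join two terminals by an edge $ss'$ when they are nonadjacent, which keeps the graph simple and bridgeless and reduces $k$ by $2$. This requires parity adjustments, since $n-k$ is even, and $k$ odd needs a separate device such as attaching a small bridgeless gadget. Applying induction to $H'=H+ss'$ gives a matching $M'$ of size $q'=q-1$ with a tree $T'$ containing $S\setminus\{s,s'\}$. If $ss'\in M'$, deleting it leaves $M'\setminus\{ss'\}$ of size $q-2$, which is too small. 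If $ss'\notin M'$, it lies in $T'$ or in a cycle of $H'-M'$. If it lies in a cycle, deleting it yields a path through $s$ and $s'$ that must be joined to $T'$. If it lies in $T'$, deleting it splits $T'$ into two trees. In either case we need one extra matching edge together with a reconnection. The tool here is an alternating path starting at $s$ that alternates between non-$M$ and $M$ edges. We follow it and stop at the first vertex where it enters the component containing the other piece. Flipping the path adds one edge to $M$ and merges the components while keeping all other components cycles. Stopping at the first entry guarantees that no new cycle is created inside the tree.

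\textbf{Main obstacle.} I expect the hardest part to be proving that this stopped alternating path exists and that flipping it preserves the two invariants: $M$ is a matching, and the non-tree components are cycles. This is where bridgelessness must enter. It is needed to guarantee an escape edge from each cycle or subtree, likely via a parity or cut argument on the edge cut around the current component, where a $2$-edge-connectivity argument prevents the path from getting stuck.
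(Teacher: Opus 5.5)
Your base case fails as written, and the induction step is missing the idea that makes the exchange work.

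\textbf{Base case.} For $k=2$ the target is $q=n/2$. Forcing the new edge $st$ \emph{into} the perfect matching $F$ of $G=H+st$ therefore gives $\abs{M}=\abs{F\setminus\{st\}}=q-1$. Moreover, since $st\in F$, one has $E(H)\setminus M=E(G)\setminus F$. So $H-M$ is the entire $2$-factor $G-F$, with no tree at all, and nothing is removed from it. The alternating repairs you sketch are never specified, and they are not needed. The paper forces the opposite: it prescribes an old edge $f$ at $s$, using the multigraph form of Sch\"onberger's theorem when $st\in E(H)$ rather than subdividing, which would destroy cubicity. The resulting perfect matching avoids $st$, so it is a perfect matching of $H$ of size $n/2=q$. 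In $H-M$ the vertices $s,t$ have degree one and all others have degree two, so $H-M$ is one $s$--$t$ path plus cycles.

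\textbf{Induction step.} There is a similar miscount here. Adding $ss'$ raises the cyclomatic number, so $q'=q+1$, not $q-1$. With the correct count, the case $ss'\in M'$ already yields exactly $q$ edges, and the real issue is that $s$ or $s'$ may lie on a residual cycle. Also, a step $k\to k-2$ from $k=2$ never reaches odd $k$.

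Your instinct to use an alternating exchange stopped at the first entry into the tree is the paper's. However, your path ``starting at $s$'' is defined relative to $M$ alone. You give no reason it must reach the other piece, or end with the right edge type. You flag this as the main obstacle, and it is exactly where the proof lives.

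The paper reduces $k$ by one instead. It first normalizes so that terminals are independent and each has nonadjacent neighbours, then suppresses a terminal $s$. It guides the exchange with a \emph{second} matching. Attaching the $k$ terminals as leaves of a cubic tree with $k-2$ internal vertices gives a bridgeless cubic graph $\widehat H$. A perfect matching of $\widehat H$ restricts to a matching $N$ of $H$ that covers every cubic vertex and at least two terminals.

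The exchange then runs as follows:
\begin{enumerate}
  \item Take $s$ to be $N$-covered. In the two bad restoration states, the unique exposure $x$ outside the tree $Q$ is then $N$-covered, and all $N$-exposures lie in $Q$.
  \item Hence the component of $M\symdiff N$ at $x$ is a path ending in $Q$.
  \item Its prefix up to the first vertex of $Q$ must end with an $M$-edge, because $Q$ is a component of $H-M$.
  \item Flipping this prefix keeps $\abs{M}=q$ and moves the stray exposure into $Q$.
  \item Finally, the identity $z(L)-t(L)=2\cyc(L)-2$ forces $s$ into the enlarged tree by parity.
\end{enumerate}

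Bridgelessness is used in the paper to obtain perfect matchings of auxiliary cubic graphs and to keep the reduced graphs within the hypotheses. It is not used through a cut argument around the growing component, as you suggest.
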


The size in \cref{eq:main-size} equals the cyclomatic number
$\abs{E(H)}-\abs{V(H)}+1$, the only value for which the complement can be a
single tree together with cycles. The hypothesis $k\ge 2$ is necessary: when
$k=0$ or $k=1$ one has $q>\abs{V(H)}/2$, so no matching of the prescribed size
exists. No upper bound on $k$ is imposed.

A cycle $C$ in a connected graph $G$ is \emph{separating} if $G-E(C)$ is
disconnected, and $G$ is \emph{fragile} when every one of its cycles is
separating. \Cref{thm:terminal-selection} yields the following consequence,
again with no bound on the number of degree-two vertices, proved in
\cref{sec:fragile}.

\begin{theorem}\label{thm:simple-fragile}
  Every finite connected simple fragile subcubic graph admits an edge partition
  into a spanning tree and a matching.
\end{theorem}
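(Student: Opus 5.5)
The plan is to deduce \cref{thm:simple-fragile} from \cref{thm:terminal-selection} through a minimal counterexample. Let $G$ be a counterexample minimising $\abs{V(G)}+\abs{E(G)}$. Three reductions come first.

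\textbf{Bridges.} If $G$ has a bridge $e=uv$, split $G-e$ into its components $G_1\ni u$ and $G_2\ni v$. Each $G_i$ is again simple, subcubic and fragile, since a cycle of $G_i$ that separates $G$ also separates $G_i$ (the bridge joins parts lying on one side). By minimality each $G_i$ has a partition $T_i\cup M_i$. We may try to put $e$ into the tree: then $T_1+T_2+e$ is a spanning tree and $M_1\cup M_2$ is a matching. So bridges cause no trouble, and we may assume $G$ is bridgeless.

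\textbf{Degree-one vertices and leaves.} These are already covered by the bridge step, since the edge at a degree-one vertex is a bridge.

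\textbf{Degree-two vertices.} Now $G$ is bridgeless with all degrees in $\{2,3\}$. If $G$ had no degree-two vertex, $G$ would be cubic and bridgeless. Any cycle in a bridgeless cubic graph is then non-separating; for example, a cycle $C$ through an edge in a suitable $2$-factor gives $G-E(C)$ connected. So fragility forces $k=\abs{\Vtwo(G)}\ge 1$. The case $k=1$ needs separate treatment: the single terminal $w$ lies on a cycle, and an ear or parity argument should show some cycle is non-separating. I expect that a cycle avoiding $w$, or the handshake parity, which forbids exactly one odd-degree count, gives a contradiction. Indeed $\abs{V_3}$ must be even, so $k=1$ is allowed, and this is genuinely the delicate subcase. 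Possibly one suppresses $w$ to get a cubic bridgeless multigraph and uses non-separating cycles there.

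\textbf{Main case, $k\ge 2$.} Apply \cref{thm:terminal-selection} to obtain $M$ with $H-M$ equal to a tree $T\supseteq S$ plus cycles $C_1,\dots,C_r$ avoiding $S$. The key point is that fragility forces $r=0$. Suppose some cycle $C_j$ survives. Every vertex of $C_j$ has degree $3$ in $G$, so it has exactly one $M$-edge leaving $C_j$ (its $C_j$-neighbours use two edges, and $M$ is a matching). Then $G-E(C_j)$ consists of $T$, the other cycles, and $M$-edges. I would show it is connected, using that $T$ together with all the $M$-edges connects everything: $G$ is connected and $M$ is the only link between components of $H-M$. I would also need that the $M$-edges at $C_j$ reach $T$ or other cycles, which are joined onward without using $E(C_j)$. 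The obstacle is that a different cycle $C_l$ might attach to the rest only through $C_j$, using $M$-edges landing on $C_j$. Those edges are not removed, though; only $E(C_j)$ is removed, and vertices of $C_j$ stay connected via their own $M$-edges. So I would argue via a contracted graph: contract $T$ and each $C_i$ to single nodes, obtaining a connected multigraph on the $M$-edges. Removing $E(C_j)$ replaces node $C_j$ by the vertices of $C_j$, each a leaf hanging off one $M$-edge. Connectivity survives unless node $C_j$ was a cut node of the contracted graph. If it is, I would choose a different cycle or modify $M$ to resolve it; this is the step I expect to be hardest. A cleaner alternative is to pick, among valid $M$, one minimising $r$, and swap an $M$-edge into $C_j$ to merge $C_j$ into $T$. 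Once $r=0$, the pair $T, M$ is the required partition, which contradicts the choice of $G$.
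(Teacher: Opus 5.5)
\textbf{There is a genuine gap.} Your frame (minimal counterexample, bridge splitting, then \cref{thm:terminal-selection} for $k\ge 2$) matches the paper. What is missing is the step that converts fragility into a contradiction: you never actually produce a cycle whose deletion leaves $G$ connected, in any of the three cases.

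\emph{Case $k=0$.} Your justification is false. Take $K_4$: its only $2$-factors are its three Hamiltonian $4$-cycles, and deleting any one of them leaves a perfect matching. So ``a cycle in a suitable $2$-factor'' is separating there; the removable cycles of $K_4$ are the triangles.

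\emph{Case $k=1$.} You leave this case open.

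\emph{Case $k\ge 2$.} Your contracted-graph claim, that $G-E(C_j)$ is connected unless $C_j$ is a cut node, is false. The matching $M$ may contain a chord $uv$ of $C_j$, and then $\{u,v\}$ is an isolated edge in $G-E(C_j)$ wherever $C_j$ sits in the contracted graph. Here is a concrete instance. Let $G$ consist of
\begin{itemize}
\item the $4$-cycle $v_1v_2v_3v_4$ with chord $v_1v_3$,
\item the path $s_1xwys_2$,
\item the edges $v_2x$, $v_4y$, $s_1s_2$.
\end{itemize}
This is a connected, bridgeless, simple $\{2,3\}$-graph with terminals $s_1,s_2,w$, so $k=3$ and $q=4$. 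Take $M=\{v_1v_3,v_2x,v_4y,s_1s_2\}$. Then $G-M$ is the all-terminal tree $s_1xwys_2$ plus the cycle $v_1v_2v_3v_4$, exactly the output shape of \cref{thm:terminal-selection}. Yet that cycle is separating.

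Your cut-node worry, by contrast, is harmless. A connected contracted graph with at least two nodes has at least two non-cut nodes, so some cycle node is always non-cut. The real obstruction is chords, and it forces you to find a removable cycle that is \emph{not} a residual cycle. Neither ``choose a different cycle'' nor the unexplained ``minimise $r$ and swap'' supplies one.

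\textbf{The missing tool} is the protected removable-cycle theorem \cref{thm:protected-cycle} (Borse--Waphare). It says: if $Q$ is a connected subgraph, some cycle is edge-disjoint from $Q$, and every vertex outside $Q$ has degree at least $3$, then some cycle $C$ edge-disjoint from $Q$ has $G-E(C)$ connected. The paper uses it in all three cases.
\begin{itemize}
\item For $k\ge 2$, take $Q$ to be the all-terminal tree. Every vertex off the tree is cubic, and any residual cycle witnesses the hypothesis. In the example above, the triangle $v_1v_2v_3$ is such a removable cycle.
\item For $k=0$ (take any vertex $v$) and for $k=1$ (take the terminal $s$), counting edges shows that $G-v$, respectively $G-s$, has positive total cyclomatic number and so contains a cycle. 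The same theorem with $Q=\{v\}$, respectively $Q=\{s\}$, then gives the contradiction.
\end{itemize}
For $k=1$ the protection is essential. A cycle through the degree-two vertex $s$ is always separating, since deleting it isolates $s$, so the removable cycle must avoid $s$. That is precisely what protecting $Q=\{s\}$ guarantees.
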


Two further steps close the gap to 3DC. Every counterexample to 2DC is, by
definition, a connected fragile $\{2,3\}$-graph; by Botler, Jim\'enez, Sambinelli,
and Wakabayashi~\cite{BotlerJimenezSambinelliWakabayashi2024}, one minimizing
$\abs{V(G)}+\abs{E(G)}$ is moreover simple, so \cref{thm:simple-fragile} applies
and rules it out. This proves 2DC in its standard loopless-multigraph form
(\cref{sec:2dc-interface}); a direct maximal-cycle and $2$-core construction then
carries 2DC to 3DC (\cref{sec:2dc-to-3dc}). In these two steps we
work with loopless multigraphs, so that parallel edges are allowed and a
parallel pair counts as a cycle of length two.

\subsection*{Overview of the proof}

We prove \cref{thm:terminal-selection} by strong induction on $k$.

\emph{Counting.}  A handshaking argument (\cref{sec:counting}) shows that the
target size $q=(n-k+2)/2$ is the unique value for which the complement can be a
tree together with cycles.  For a matching of size $q$, the \emph{component
identity} $z(L)-t(L)=2\cyc(L)-2$ (where $z$, $t$, $\cyc$ count exposures,
terminals, and cyclomatic number in a component $L$) reduces the problem to placing all
terminals in a single component.

\emph{Normalizations.}  In a minimum counterexample with $k$ terminals, two
local reductions (\cref{sec:normalizations}) enforce that no two terminals are
adjacent and every terminal can be suppressed without creating a loop or a
parallel edge.

\emph{Comparison matching.}  To choose which terminal to suppress, we attach all
$k$ terminals to an arbitrary full cubic tree $\mathcal T$ with $k-2$ internal
vertices, forming a cubic graph $\widehat H$ (\cref{sec:cubic-tree}).  A
perfect matching $P$ of $\widehat H$ restricts to a matching $N=P\cap E(H)$ of
size $q+r$ ($r\ge 0$) that covers every cubic vertex and at least two terminals.

\emph{First-entry transfer.}  Suppressing a terminal $s$ covered by $N$, applying
the induction hypothesis to the smaller graph $J$, and restoring $s$ yields four
cases (\cref{sec:first-entry}): two finish directly, and two leave a single
vertex $x$ outside the growing tree component $Q$.  In these two cases, $x$ is
$N$-covered and every $N$-exposure is already in $Q$, so a symmetric-difference
path from $x$ must re-enter $Q$ at some first vertex $d$.  Flipping along the
prefix $x\cdots d$ (which contains equally many edges of each matching)
moves the exposure outside $Q$ into $Q$ without removing any complement edge
inside $Q$.
The parity identity then forces $s$ into $Q'$, closing the induction regardless
of the value of $r$.

\subsection*{Organization}

\Cref{sec:prelim} fixes conventions and recalls the external results we use.
\Cref{sec:terminal-selection} proves \cref{thm:terminal-selection}.
\Cref{sec:fragile} deduces \cref{thm:simple-fragile}, and
\cref{sec:conjectures} passes from it to 2DC and then to 3DC.
\Cref{sec:scope} collects concluding remarks and open problems.

\section{Preliminaries}\label{sec:prelim}

All graphs in this paper are finite. For a matching $M$, a vertex is
\emph{covered} if it is incident with an edge of $M$, and \emph{exposed}
otherwise. Deleting a set of edges never deletes vertices. A $2$-regular
subgraph need not be spanning and may be empty, unless nonemptiness is
explicitly required. For a connected graph $L$ we write
$\cyc(L)=\abs{E(L)}-\abs{V(L)}+1$ for its cyclomatic number. We write $A\dcup B$ for the
union of two edge-disjoint edge sets, used to emphasise that the union is
disjoint.

\begin{theorem}[Sch\"onberger; Boyd--Iwata--Takazawa]\label{thm:specified-edge}
  If $K$ is a finite bridgeless cubic loopless multigraph and $e\in E(K)$,
  then $K$ has a perfect matching containing $e$.
\end{theorem}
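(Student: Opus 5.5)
The plan is to use the perfect matching polytope: show that the uniform vector $x\equiv\tfrac13$ lies in it, and conclude that some perfect matching uses $e$. Let $K$ be a finite bridgeless cubic loopless multigraph and define $x\in\mathbb{R}^{E(K)}$ by $x_f=\tfrac13$ for every edge $f$. I will invoke Edmonds' description of the perfect matching polytope, which holds verbatim for loopless multigraphs since parallel edges are simply distinct coordinates. It states that the convex hull of incidence vectors of perfect matchings is
\[
\{x\ge 0:\ x(\delta(v))=1 \text{ for all } v,\ \ x(\delta(X))\ge 1 \text{ for all } X\subseteq V(K) \text{ with } \abs{X} \text{ odd}\}.
\]

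The degree constraints are immediate. Because $K$ is loopless and cubic, every vertex meets exactly three edge-ends, all on distinct non-loop edges, so $x(\delta(v))=3\cdot\tfrac13=1$.

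The only real content is the odd-cut constraint. Take $X$ with $\abs{X}$ odd and count edge-ends inside $X$:
\[
3\abs{X}=2\abs{E(K[X])}+\abs{\delta(X)}.
\]
This forces $\abs{\delta(X)}\equiv\abs{X}\equiv 1\pmod 2$. It follows that $\abs{\delta(X)}\neq 0$, and bridgelessness excludes $\abs{\delta(X)}=1$. Hence $\abs{\delta(X)}\ge 3$ and $x(\delta(X))\ge 1$. This also shows $V(K)$ is even, since otherwise $X=V(K)$ would give an odd cut of size $0$.

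Thus $x$ is a convex combination $\sum_i\lambda_i\chi^{P_i}$ of perfect matchings $P_i$ with $\lambda_i>0$. Since $x_e=\tfrac13>0$, at least one $P_i$ contains $e$, which is exactly the conclusion of \cref{thm:specified-edge}.

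The main obstacle is not the computation but the reliance on Edmonds' polytope theorem as a black box, together with checking that it applies to multigraphs; for loopless multigraphs this is standard. If a more elementary route is preferred, I would instead apply Tutte's theorem to $K-\{u,v\}$, where $e=uv$. Suppose $K-\{u,v\}$ had a set $Y$ with $\mathrm{odd}(K-\{u,v\}-Y)>\abs{Y}$. Then set $Y'=Y\cup\{u,v\}$. Each odd component $C$ of $K-Y'$ sends an odd number of edges to $Y'$, and a single such edge would be a bridge, so each sends at least $3$. The parity of $\abs{V(K)}$ forces $\mathrm{odd}\ge\abs{Y}+2$. These odd components then emit at least $3\abs{Y}+6$ edges into $Y'$. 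On the other hand, $Y'$ can receive at most $3\abs{Y'}-2=3\abs{Y}+4$ edges, because the edge $e$ uses two endpoint-slots inside $Y'$. This is a contradiction, and the Tutte condition then yields a perfect matching of $K-\{u,v\}$, which together with $e$ gives the required perfect matching of $K$.
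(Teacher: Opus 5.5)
Your proposal is correct. The comparison with the paper is that the paper gives no proof of this statement at all. It imports the result from Sch\"onberger, using the version in Boyd--Iwata--Takazawa (Theorem~2.1), and it chooses that version because it allows parallel edges. The paper needs parallel edges in its base case $k=2$, where the added edge $st$ may be parallel to an existing one.

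Your first argument is the standard polytope proof. The vector $x\equiv\tfrac13$ satisfies the degree constraints. The identity $3\abs{X}=2\abs{E(K[X])}+\abs{\delta(X)}$ together with bridgelessness gives $\abs{\delta(X)}\ge 3$ for every odd $X$. This route replaces one black box with a heavier one, Edmonds' theorem. In exchange it yields more for free: since $0<x_f<1$ for every edge $f$, each edge is both used by some perfect matching and avoided by another.

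Your second argument is the more useful one in this context, since it needs only Tutte's $1$-factor theorem. The count is sound. Each odd component of $K-Y'$ sends an odd number of edges, hence at least $3$, into $Y'$. The parity of $\abs{V(K)}$ forces at least $\abs{Y}+2$ such components. But $Y'$ has at most $3\abs{Y'}-2=3\abs{Y}+4$ free edge-ends, because $e$ lies inside $Y'$.

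Both arguments are insensitive to parallel edges; a parallel copy of $e$ only reduces the free ends in $Y'$ further. Either one would make the paper self-contained on this point, just as it chose to reprove the Borse--Waphare lemma.
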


This strengthening of Petersen's theorem is due to
Sch\"onberger~\cite{Schonberger1935}; we use the statement as given by Boyd, Iwata,
and Takazawa~\cite[Theorem~2.1]{BoydIwataTakazawa2013}, which allows parallel
edges. In particular, every finite bridgeless cubic loopless multigraph has a
perfect matching.

\begin{theorem}[Borse--Waphare~\cite{BorseWaphare2013}]\label{thm:protected-cycle}
  Let $G$ be a connected simple graph and let $Q$ be a connected subgraph.
  Suppose that $G$ contains a cycle edge-disjoint from $Q$, and that every
  $v\in V(G)\setminus V(Q)$ satisfies $d_G(v)\ge 3$. Then $G$ contains a cycle
  $C$ edge-disjoint from $Q$ such that $G-E(C)$ is connected.
\end{theorem}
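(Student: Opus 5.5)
We argue by extremality. By hypothesis the family $\mathcal C$ of cycles of $G$ that are edge-disjoint from $Q$ is nonempty. For $C\in\mathcal C$, the subgraph $Q$ is connected and avoids $E(C)$, so it lies in a single component $A_C$ of $G-E(C)$. We choose $C\in\mathcal C$ with $\abs{V(A_C)}$ maximum, write $A=A_C$, and claim that $G-E(C)$ is connected. Suppose not, and let $D\neq A$ be another component of $G-E(C)$. We will produce $C'\in\mathcal C$ with $\abs{V(A_{C'})}>\abs{V(A)}$, a contradiction.

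\emph{Preliminary facts.} Since $G$ is connected, every edge leaving $V(D)$ lies in $E(C)$. Hence $D$ meets $V(C)$. Similarly, $A$ meets $V(C)$, say at a vertex $a$. Every vertex of $D$ lies outside $V(Q)\subseteq V(A)$, so it has $G$-degree at least $3$. A vertex of $D$ on $C$ loses exactly two edges to $C$, so its $D$-degree is at least $1$; every other vertex of $D$ has $D$-degree at least $3$. Finally, $D$ contains no edge of $Q$.

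\emph{Case 1: $D$ meets $C$ in at least two vertices $u\neq w$.} Let $P$ be a $u$--$w$ path in $D$. The vertex $a$ is distinct from $u$ and $w$, since components are vertex-disjoint. Let $R$ be the $u$--$w$ arc of $C$ not containing $a$, and $R'$ the other arc, which contains $a$ in its interior. Set $C'=P\cup R$. Its internal vertices are distinct, since $P$ lies in $D$ and the interior of $R$ lies on $C$, away from $u$ and $w$. It is a genuine cycle: if $P$ and $R$ were both the single edge $uw$, this edge would lie in both $D$ and $C$, which is impossible. Also $C'$ avoids $E(Q)$. In $G-E(C')$, every edge of $A$ survives, because $A$ shares no edge with $P\subseteq D$ or with $R\subseteq C$. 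The freed arc $R'$ survives as well and joins $a$ to $u\in V(D)$. Therefore $A_{C'}\supseteq A\cup\{u\}$, which is strictly larger than $A$.

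\emph{Case 2: $D$ meets $C$ in exactly one vertex $u$.} All vertices of $D$ have $D$-degree at least $2$, except possibly $u$, which has $D$-degree at least $1$. Such a finite graph is not a forest, because a tree with at least two vertices has two leaves. Hence $D$ contains a cycle $C''$, and $C''\in\mathcal C$. In $G-E(C'')$, all edges of $A$ and all edges of $C$ survive. The arc of $C$ from $a$ to $u$ then places $u$ in $A_{C''}$, so again $A_{C''}$ is strictly larger than $A$.

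In both cases we contradict the maximal choice of $C$, so $G-E(C)$ is connected, which proves the statement.

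The main point needing care is the well-definedness of $C'$ in Case 1. Simplicity of $G$ is what rules out a degenerate $2$-cycle formed by $P$ and $R$. The degree condition outside $Q$ is what guarantees that $D$ has enough edges to supply the path in Case 1 or the cycle in Case 2.
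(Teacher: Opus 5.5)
Your overall strategy is sound, and the preliminary facts and Case~2 are correct. Case~1 has a genuine flaw, though: you assert that $C'=P\cup R$ is a cycle because ``$P$ lies in $D$ and the interior of $R$ lies on $C$.'' Lying in $D$ does not keep $P$ off $C$. The component $D$ contains every vertex of $V(D)\cap V(C)$, and this set may have three or more elements. Nothing prevents an arbitrary $u$--$w$ path $P$ in $D$ from passing through such a vertex $v$ in the interior of $R$. Then $P$ and $R$ share the internal vertex $v$, $P\cup R$ is not a cycle, and $C'$ is undefined.

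The repair is to choose $P$ with no internal vertex on $C$. Take any path in $D$ between two vertices of $C$, and pass to a segment between two consecutive vertices of $C$ on it. Equivalently, choose $u,w\in V(D)\cap V(C)$ at minimum distance in $D$, exactly as the paper does in the analogous step. Then $P$ and $R$ meet only in $\{u,w\}$, and your simplicity remark excludes the degenerate case. The rest of Case~1 is unchanged: $P$ still uses no edge of $C$, so the arc $R'$ is freed and joins $a$ to $u$.

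With that fix your proof is valid, and it takes a different route from the paper's. The paper maximizes $\abs{E(H)}$ over connected subgraphs $H\supseteq Q$ whose edge-complement $F$ still contains a cycle. Maximality makes $F-e$ a forest for an edge $e$ of $F$ at $H$, so $C$ is forced to be the unique cycle of $F$. The configuration of your Case~1 (a piece off $H$ meeting $C$ twice) is then an immediate contradiction, since it yields a second cycle in $F$. The configuration of your Case~2 is ruled out by the degree count $2(m-1)\ge 3m-2$, which is the same leaf argument you use to find a cycle inside $D$.

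You instead optimize directly over cycles, using the order of the component containing $Q$ as the potential. In exchange you must construct an improved cycle in each case, including the rerouting $P\cup R$, and that is exactly where care with the choice of $P$ is needed. The paper's maximal-$H$ device makes $C$ rigid enough that every bad configuration is an outright contradiction, with no rerouting.
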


Borse and Waphare~\cite{BorseWaphare2013} state this, attributing it to their
earlier work~\cite{BorseWaphare2009}; we include a short self-contained proof.

\begin{proof}
  Let $\mathcal H$ be the family of connected subgraphs $H$ of $G$ such that
  $Q\subseteq H$ and the spanning edge-complement
  \[
    F_H=\bigl(V(G),\,E(G)\setminus E(H)\bigr)
  \]
  contains a cycle. The hypothesised witness cycle shows that
  $Q\in\mathcal H$. Since $G$ is finite, choose $H\in\mathcal H$ with
  $\abs{E(H)}$ maximum, and write $F=F_H$.

  Every component of $F$ meets $V(H)$. Otherwise a shortest $G$-path from such
  a component to $V(H)$ would, up to its first vertex in $V(H)$, use only edges
  outside $E(H)$; it would therefore be an $F$-path joining the component to
  $V(H)$, a contradiction.

  Choose a component of $F$ that contains a cycle. It meets $V(H)$ and is
  nontrivial, so it has an edge $e\in E(F)$ incident with a vertex of $H$. The
  subgraph $H+e$, including the other endpoint of $e$ if necessary, is
  connected and contains $Q$. Maximality of $H$ forces $F-e$ to be acyclic,
  for otherwise $H+e$ would be a member of $\mathcal H$ with more edges. Thus
  $F-e$ is a forest. Since $F$ contains a cycle, adding the single edge $e$ to
  that forest creates exactly one cycle, which we denote by $C$. In
  particular, $C$ is edge-disjoint from $H$, and hence from $Q$.

  It remains to prove that $G-E(C)$ is connected. Suppose a component $D$ of
  the forest $F-E(C)$ does not meet $V(H)$. The component of $F$ containing $D$
  does meet $V(H)$, so $D$ contains a vertex of $C$; otherwise $D$ would
  already be an $F$-component. It contains at most one such vertex: if there
  were two or more, choose two at minimum distance in $D$. Their $D$-path has
  no internal vertex on $C$, and together with either arc of $C$ it would give
  a second cycle in $F$. Consequently
  \[
    V(D)\cap V(C)=\{x\}
  \]
  for some vertex $x$.

  Every vertex of $D$ lies outside $V(H)$, hence outside $V(Q)$, and every edge
  of $G$ incident with such a vertex belongs to $F$. No cycle edge is incident
  with a vertex of $D-\{x\}$, while exactly the two edges of the simple cycle
  $C$ incident with $x$ were deleted. Therefore
  \[
    d_D(v)=d_G(v)\ge 3\quad(v\in V(D)\setminus\{x\}),
    \qquad
    d_D(x)=d_G(x)-2\ge 1.
  \]
  Writing $m=\abs{V(D)}$ and using that $D$ is a finite tree gives
  \[
    2(m-1)=\sum_{v\in V(D)}d_D(v)\ge 1+3(m-1)=3m-2,
  \]
  which forces $m\le 0$, a contradiction. Hence every component of $F-E(C)$
  meets the connected subgraph $H$. Their union with $H$ is precisely
  $G-E(C)$, and is connected. This proves the theorem.
\end{proof}

\begin{theorem}[Botler--Jim\'enez--Sambinelli--Wakabayashi~\cite{BotlerJimenezSambinelliWakabayashi2024}]\label{thm:min-counterexample}
  Among connected fragile loopless multigraphs that are $\{2,3\}$-graphs, a
  counterexample to the $2$-Decomposition Conjecture minimizing
  $\abs{V(G)}+\abs{E(G)}$, if one exists, is simple and $2$-edge-connected.
\end{theorem}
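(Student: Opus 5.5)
The plan is to take a counterexample $G$ that minimizes $\abs{V(G)}+\abs{E(G)}$ and rule out, in order, bridges and then parallel edges. The point of this ordering is that in a bridgeless fragile graph a parallel pair forces a bridge, so simplicity comes almost for free once bridges are gone.

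\emph{Step 1: no bridges.} Let $e=uv$ be a bridge, and let $G_1\ni u$ and $G_2\ni v$ be the two components of $G-e$.
\begin{itemize}
\item \emph{Fragility passes to the pieces.} A cycle $C$ of $G_1$ is a cycle of $G$. If $G_1-E(C)$ were connected, then $G-E(C)$ would also be connected, because $G_1-E(C)$ is joined to the untouched $G_2$ through $e$. This contradicts fragility of $G$, so every cycle of $G_1$ is separating in $G_1$, and likewise for $G_2$.
\item \emph{Degrees after deleting $e$.} The vertex $u$ has degree $1$ or $2$ in $G_1$, and similarly for $v$ in $G_2$. A degree-$1$ end is handled by peeling off the maximal pendant path ending at it. That path consists of degree-$2$ vertices of $G$ and ends at a vertex of degree $3$ in $G$. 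After removing it, every remaining vertex again has degree $2$ or $3$, except in degenerate situations where the piece is a path or a single vertex, which trivially decomposes.
\item \emph{Applying minimality.} Each resulting piece $G_i'$ is a smaller connected fragile loopless $\{2,3\}$-graph, or a trivial graph. By minimality it has a decomposition $G_i'=T_i\dcup M_i$.
\item \emph{Reassembly.} Put $e$ and all edges of the peeled pendant paths into the tree. Then $T_1\cup T_2\cup\{e\}\cup(\text{paths})$ is a spanning tree of $G$, and $M_1\cup M_2$ is still a matching, since no added edge belongs to it. This contradicts the choice of $G$.
\end{itemize}
The bookkeeping with degree-$1$ ends is the only delicate part of this step. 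It must also be checked that the peeled graph is still fragile, which holds by the same argument as above, since removing a pendant path does not affect which cycles separate.

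\emph{Step 2: no parallel edges.} Loops are excluded by hypothesis. Suppose $u$ and $v$ are joined by parallel edges $f_1,f_2$.
\begin{itemize}
\item \emph{Triple edge.} If there are three parallel edges, then $G$ is the triple edge. Deleting any two of them leaves the third, so the graph is connected and the cycle is not separating. Hence this graph is not fragile and cannot occur.
\item \emph{Digon cut off by a bridge.} Otherwise, fragility says that $G-\{f_1,f_2\}$ is disconnected. If both $u$ and $v$ have degree $3$, let $a$ and $b$ be their remaining neighbours. Then $G-\{f_1,f_2\}$ splits into a side containing $u,a$ and a side containing $v,b$, which forces $au$ to be a bridge of $G$. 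This contradicts Step 1.
\item \emph{Digon with a degree-$2$ end.} If $u$ has degree $2$, then both edges at $u$ go to $v$. Either $G$ is the digon itself, which decomposes as one tree edge plus one matching edge, or the third edge at $v$ is a bridge. The latter again contradicts Step 1.
\end{itemize}
So $G$ is simple, and Step 1 already gives that it is $2$-edge-connected.

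I expect the main obstacle to lie in Step 1, in verifying carefully that every reduced piece stays within the class. That means checking connectedness, fragility, looplessness and minimum degree $2$ after the pendant-path peeling, including the degenerate cases where a piece collapses to a path or a single vertex. I would try first to handle all of these uniformly by allowing trivial trees as decompositions. The parallel-edge step, done after bridgelessness, is short.
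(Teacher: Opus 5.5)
Your proposal is correct, and it takes a genuinely different route from the paper, because the paper gives no proof of this statement. It imports the statement as item~(1) of Theorem~1.5 of Botler, Jim\'enez, Sambinelli and Wakabayashi and uses it as a black box in the passage from the simple fragile theorem to 2DC. Your argument is a self-contained, elementary proof of exactly the part the paper needs.

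Your bridge step uses the same splitting-and-gluing device the paper applies at the start of its fragile-graph section. There the class is all subcubic graphs, so no trimming is needed. In the $\{2,3\}$ class, your pendant-path peeling is precisely what returns each shore to the class. Fragility, connectedness and looplessness survive because the peeled edges lie on no cycle.

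Your ``degenerate situations'' never arise. In $G_1$ only $u$ can have degree below two, so $G_1$ is not a tree, and the peeled path must stop at a vertex of degree three.

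In the digon step, one point deserves to be stated explicitly. Every vertex reaches $\{u,v\}$ in $G-\{f_1,f_2\}$ along a shortest path, so disconnectedness forces $u$ and $v$ into different components. In particular $a\neq b$, and only then is $ua$ a bridge.

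The citation buys brevity and access to the further structural properties established by Botler et al. Your route makes the deduction of 2DC independent of that external structure theorem, resting only on elementary cut arguments.
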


This is item~(1) of \cite[Theorem~1.5]{BotlerJimenezSambinelliWakabayashi2024},
whose setting, like ours in \cref{sec:2dc-interface,sec:2dc-to-3dc}, permits
parallel edges, forbids loops, and treats a parallel pair as a cycle of length
two.

The loopless-multigraph forms of 2DC and 3DC are equivalent, by
Hoffmann-Ostenhof--Kaiser--Ozeki
\cite[Proposition~14]{HoffmannOstenhofKaiserOzeki2018}. We prove the direction
we need, that 2DC implies 3DC, in \cref{sec:2dc-to-3dc}.

\section{Terminal selection}\label{sec:terminal-selection}

The proof of \cref{thm:terminal-selection} is by strong induction on the number
$k$ of terminals.

\subsection{Counting inside a matching complement}\label{sec:counting}

Let $H$ be a $\{2,3\}$-graph, put $n=\abs{V(H)}$, and let $k=\abs{\Vtwo(H)}$.
The handshaking identity gives
\[
  2\abs{E(H)}=2k+3(n-k)=3n-k.
\]
Consequently $n-k=\abs{\Vthree(H)}$ is even and
\[
  \abs{E(H)}-\abs{V(H)}+1=\frac{n-k+2}{2}=\frac{\abs{\Vthree(H)}}{2}+1.
\]
Thus the target size
\[
  q:=\frac{\abs{V(H)}-k+2}{2}
\]
is an integer and $q\ge 1$. A matching of size $q$
covers $2q=n-k+2$ vertices, and therefore exposes exactly
\begin{equation}\label{eq:exposed}
  n-2q=k-2
\end{equation}
vertices.

Let $M$ be any
matching of $H$, and let $L$ be a component of $H-M$. Define
\[
  t(L)=\abs{V(L)\cap \Vtwo(H)},
  \qquad
  z(L)=\abs{\{v\in V(L):v\text{ is exposed by }M\}},
\]
and recall the cyclomatic number $\cyc(L)=\abs{E(L)}-\abs{V(L)}+1$.

\begin{lemma}\label{lem:component-identity}
  For every component $L$ of $H-M$,
  \begin{equation}\label{eq:component-identity}
    z(L)-t(L)=2\cyc(L)-2.
  \end{equation}
\end{lemma}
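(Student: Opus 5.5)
The identity follows from a degree count inside $L$. The point is that, within $H-M$, a vertex loses one degree exactly when it is covered by $M$, so the degree of each vertex of $L$ is determined by whether it is a terminal and whether it is exposed.

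\emph{Step 1: degrees in $H-M$.} Since $M$ is a matching, each vertex $v$ loses exactly one incident edge if it is covered and none if it is exposed. Hence
\[
  d_{H-M}(v)=d_H(v)-1+[\,v\text{ exposed}\,].
\]
Every edge of $H-M$ at a vertex of $L$ lies in $L$, because $L$ is a component. Therefore $d_L(v)=d_{H-M}(v)$ for all $v\in V(L)$.

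\emph{Step 2: sum over $L$.} Summing over $V(L)$, write $d_H(v)=3-[\,v\in\Vtwo(H)\,]$, which is valid because $H$ is a $\{2,3\}$-graph. This gives
\[
  2\abs{E(L)}=\sum_{v\in V(L)}d_L(v)=3\abs{V(L)}-t(L)-\abs{V(L)}+z(L)=2\abs{V(L)}-t(L)+z(L).
\]

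\emph{Step 3: rewrite in terms of $\cyc(L)$.} Rearranging gives $z(L)-t(L)=2\abs{E(L)}-2\abs{V(L)}$. Since $\cyc(L)=\abs{E(L)}-\abs{V(L)}+1$, the right-hand side equals $2\cyc(L)-2$, which is \cref{eq:component-identity}.

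The only step needing care is Step 1's claim that $d_L=d_{H-M}$ on $V(L)$, which holds because $L$ is a whole component. This includes the case where $L$ is an isolated vertex: such a vertex is a covered terminal, so $z=0$, $t=1$, and $\cyc=0$, consistent with the identity. No other obstacle is expected.
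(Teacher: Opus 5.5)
Your degree count is correct and is essentially the paper's proof: the paper sums $d_L(v)-2$ over $V(L)$, counting $+1$ for each exposed cubic vertex and $-1$ for each covered terminal. This is exactly your indicator formula $d_L(v)-2=[\,v\text{ exposed}\,]-[\,v\in\Vtwo(H)\,]$ summed over $V(L)$.

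Your closing aside about an isolated vertex is wrong, though harmless. A covered terminal has degree one in $H-M$, not zero. The values $z=0$, $t=1$, $\cyc=0$ would give $-1\ne -2$, so they are not consistent with the identity. In fact no component of $H-M$ is an isolated vertex, since every vertex keeps degree at least $2-1=1$. The remark should simply be dropped, because your general count already covers every component.
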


\begin{proof}
  A covered terminal has degree one in $H-M$, an exposed terminal has degree
  two, a covered cubic vertex has degree two, and an exposed cubic vertex has
  degree three. Hence the sum of $d_L(v)-2$ over $V(L)$ equals the number of
  exposed cubic vertices minus the number of covered terminals, which is
  exactly $z(L)-t(L)$. On the other hand,
  \[
    \sum_{v\in V(L)}\bigl(d_L(v)-2\bigr)=2\abs{E(L)}-2\abs{V(L)}=2\cyc(L)-2.
  \]
  Comparing the two expressions proves \cref{eq:component-identity}.
\end{proof}

\begin{lemma}\label{lem:terminal-criterion}
  Let $k\ge 2$ and let $M$ be a matching of size $q$. Then the following are
  equivalent.
  \begin{enumerate}[(i)]
    \item All $k$ terminals lie in one component of $H-M$.
    \item The graph $H-M$ consists of one tree containing all terminals and
      zero or more cycles containing no terminal.
  \end{enumerate}
  Moreover, all $k-2$ exposed vertices then lie in the distinguished tree.
\end{lemma}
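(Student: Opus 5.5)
The plan is to sum the component identity of \cref{lem:component-identity} over all components of $H-M$ and compare it with the global exposure count \cref{eq:exposed}. Let $L_1,\dots,L_m$ be the components of $H-M$. Every vertex lies in exactly one component, so $\sum_i t(L_i)=k$ and $\sum_i z(L_i)=k-2$ by \cref{eq:exposed}. Summing \cref{eq:component-identity} gives
\[
  \sum_{i=1}^m \bigl(2\cyc(L_i)-2\bigr)=(k-2)-k=-2,
  \qquad\text{that is,}\qquad
  \sum_{i=1}^m \bigl(\cyc(L_i)-1\bigr)=-1 .
\]
Since each $\cyc(L_i)\ge 0$, every summand is at least $-1$. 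The equation therefore says that the number of tree components exceeds $\sum_i \cyc(L_i)$ taken over the non-tree components, where the excess counts over the non-tree components.

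For (ii)$\Rightarrow$(i) there is nothing to prove, since the tree contains all terminals. For (i)$\Rightarrow$(ii), let $L_1$ contain all $k$ terminals. Every other component $L$ has $t(L)=0$, so \cref{eq:component-identity} gives $z(L)=2\cyc(L)-2\ge 0$, which forces $\cyc(L)\ge 1$. Thus no other component is a tree, and each summand $\cyc(L_i)-1$ with $i\ge 2$ is nonnegative.

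Next I consider $L_1$, for which $\cyc(L_1)-1\ge -1$. The sum equals $-1$ and the other summands are nonnegative, so $\cyc(L_1)=0$ and $\cyc(L_i)=1$ for all $i\ge 2$. Hence $L_1$ is a tree. For $i\ge 2$ we get $z(L_i)=0$ and $t(L_i)=0$, so every vertex of $L_i$ is a covered cubic vertex and has degree exactly two in $H-M$. A connected finite graph with all degrees two is a cycle, and it contains no terminal. This gives (ii).

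For the ``moreover'' clause, $z(L_i)=0$ for $i\ge 2$ shows that all $k-2$ exposed vertices lie in $L_1$. The same conclusion also follows from $z(L_1)=t(L_1)+2\cyc(L_1)-2=k-2$.

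I expect no real obstacle here. The only delicate point is making sure that $k\ge 2$ guarantees a terminal exists, so that the distinguished component is well defined. I would also check that isolated vertices cannot occur, which holds because a covered terminal keeps degree one and every other vertex keeps degree at least two. An isolated vertex would in any case be a tree with $t=0$, which is excluded by the argument for the components $L_i$ with $i\ge 2$.
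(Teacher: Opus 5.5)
Your proof is correct and is essentially the paper's argument. Both rest on \cref{eq:component-identity}, the global count of $k-2$ exposures, and $\cyc\ge 0$; the paper applies the identity only to the terminal component $L$ to squeeze $k-2\le z(L)\le k-2$ (hence $\cyc(L)=0$ and all exposures lie in $L$), whereas you first sum it over all components. The interpretive sentence after your displayed sum is muddled as phrased (the number of tree components actually equals $1+\sum(\cyc(L_i)-1)$ taken over the non-tree components), but nothing later uses it, so you can simply delete it.
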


\begin{proof}
  Condition~(ii) plainly implies~(i). Conversely, let $L$ be the component
  containing all $k$ terminals. By \cref{eq:exposed}, $z(L)\le k-2$. Identity
  \cref{eq:component-identity} together with $\cyc(L)\ge 0$ gives
  $z(L)\ge k-2$. Thus $z(L)=k-2$ and $\cyc(L)=0$, so $L$ is a tree and contains
  every exposed vertex. Any other component $R$ has $t(R)=z(R)=0$, whence
  $\cyc(R)=1$. Every vertex of $R$ is a covered cubic vertex, and hence has
  degree two in $H-M$. Therefore $R$ is a cycle.
\end{proof}

\subsection{Preparing a minimum counterexample}\label{sec:normalizations}

Fix $k\ge 3$, assume \cref{thm:terminal-selection} for
$k-1$, and, for contradiction, let $H$ be a counterexample with $k$ terminals
and the fewest possible vertices. The induction hypothesis then applies at
every order with $k-1$ terminals, while minimality of order applies to any
smaller graph with the same number $k$ of terminals.

\medskip\noindent\textbf{Reduction 1: no two terminals are adjacent.}

Suppose two terminals $u,v$ are adjacent. Let $x$ and $y$ be their respective
other neighbours. If $x=y$ is cubic, its third edge is the only edge from the
triangle $uvx$ to the rest of the graph, hence a bridge. If $x=y$ has degree
two, connectedness makes $H=C_3$, for which any one-edge matching leaves a
spanning path. Neither possibility occurs in the chosen counterexample, so
$x\ne y$.

Contract $uv$ to a new vertex $w$, retaining the edges $wx,wy$
(\cref{fig:reduction-adjacent}). The resulting graph $H_1$ is connected,
simple, bridgeless, and has $k-1$ terminals. It is simple because $x\ne y$;
bridgelessness follows by contracting $uv$ inside a cycle through each surviving
edge. Its target size is unchanged,
\[
  \frac{(\abs{V(H)}-1)-(k-1)+2}{2}=q .
\]
Apply the induction hypothesis to $H_1$, and let $Q$ be its tree component
containing all terminals. Since $w$ is a terminal, at most one of $wx,wy$ lies
in the selected matching, and the following three lifts are exhaustive.
\begin{itemize}
  \item If neither edge is matched, replace the tree path $xwy$ by $xuvy$.
  \item If $wx$ is matched, replace $wx$ in the matching by $xu$, and replace
    the tree leaf $yw$ by the path $yvu$.
  \item If $wy$ is matched, use the symmetric lift.
\end{itemize}
Each lift preserves the matching cardinality, expands only a path or a leaf of
$Q$, and leaves every residual cycle unchanged. It yields the desired
decomposition of $H$, a contradiction. Thus the terminal set of $H$ is
independent.

\begin{figure}[ht]
  \centering
  \begin{tikzpicture}[baseline]
    \node[tvertex,label={[glabel]right:$u$}] (u) at (0,0.65) {};
    \node[tvertex,label={[glabel]right:$v$}] (v) at (0,-0.65) {};
    \node[cvertex,label={[glabel]above:$x$}] (x) at (-1.5,1.3) {};
    \node[cvertex,label={[glabel]below:$y$}] (y) at (-1.5,-1.3) {};
    \draw[medge] (u) -- (v);
    \draw[gedge] (u) -- (x);
    \draw[gedge] (v) -- (y);
    \draw[gedge] (x) -- ++(160:0.75); \draw[gedge] (x) -- ++(200:0.75);
    \draw[gedge] (y) -- ++(160:0.75); \draw[gedge] (y) -- ++(200:0.75);
  \end{tikzpicture}
  \qquad\transarrow\qquad
  \begin{tikzpicture}[baseline]
    \node[tvertex,label={[glabel]right:$w$}] (w) at (0,0) {};
    \node[cvertex,label={[glabel]above:$x$}] (x) at (-1.5,1.3) {};
    \node[cvertex,label={[glabel]below:$y$}] (y) at (-1.5,-1.3) {};
    \draw[gedge] (w) -- (x);
    \draw[gedge] (w) -- (y);
    \draw[gedge] (x) -- ++(160:0.75); \draw[gedge] (x) -- ++(200:0.75);
    \draw[gedge] (y) -- ++(160:0.75); \draw[gedge] (y) -- ++(200:0.75);
  \end{tikzpicture}
  \caption{Removing an adjacency between terminals: the edge $uv$ (heavy) is
  contracted to a single terminal $w$. Hollow nodes are terminals; solid nodes
  are cubic.}
  \label{fig:reduction-adjacent}
\end{figure}

\medskip\noindent\textbf{Reduction 2: every terminal is suppressible.}

Let a terminal $s$ have neighbours $a,b$, and suppose $ab\in E(H)$. Terminal
independence makes $a,b$ cubic. Let $x$ and $y$ be their third neighbours. If
$x=y$ is cubic, its third edge is the sole edge leaving the four-vertex subgraph
on $s,a,b,x$, hence a bridge. If $x=y$ has degree two, connectedness makes this
four-vertex subgraph all of $H$, with exactly two terminals, contrary to
$k\ge 3$. Therefore $x\ne y$.

Replace the triangle $sab$ by a new degree-two vertex $w$ adjacent to $x,y$
(\cref{fig:reduction-triangle}). The graph $H_2$ is connected, bridgeless and
simple: cycles through the replaced triangle contract to cycles through $w$, and
all other cycles survive. It has $k$ terminals, two fewer vertices, and target
size $q-1$. Minimality of order therefore supplies a matching of size $q-1$
whose complement has an all-terminal tree $Q$ and cycles. Again there are three
states at $w$.
\begin{itemize}
  \item If neither $wx$ nor $wy$ is matched, add $ab$ to the matching and
    replace the tree path $xwy$ by $xasby$.
  \item If $wx$ is matched, replace it by the two disjoint matching edges
    $ax,sb$, and replace the tree leaf $yw$ by $ybas$.
  \item If $wy$ is matched, use the symmetric construction.
\end{itemize}
Each lift adds exactly one matching edge, as required when two vertices are
restored, and expands only a path or leaf of $Q$. This again contradicts the
choice of $H$. Hence every terminal of $H$ has two distinct cubic neighbours
that are nonadjacent; in particular, every terminal can be suppressed without
creating a loop or a parallel edge.

\begin{figure}[ht]
  \centering
  \begin{tikzpicture}[baseline]
    \node[tvertex,label={[glabel]above:$s$}] (s) at (0,1.05) {};
    \node[cvertex,label={[glabel]below:$a$}] (a) at (-0.8,0) {};
    \node[cvertex,label={[glabel]below:$b$}] (b) at (0.8,0) {};
    \node[cvertex,label={[glabel]above:$x$}] (x) at (-2.0,0.75) {};
    \node[cvertex,label={[glabel]above:$y$}] (y) at (2.0,0.75) {};
    \draw[gedge] (s) -- (a);
    \draw[gedge] (s) -- (b);
    \draw[gedge] (a) -- (b);
    \draw[gedge] (a) -- (x);
    \draw[gedge] (b) -- (y);
    \draw[gedge] (x) -- ++(150:0.7); \draw[gedge] (x) -- ++(215:0.7);
    \draw[gedge] (y) -- ++(30:0.7);  \draw[gedge] (y) -- ++(325:0.7);
  \end{tikzpicture}
  \qquad\transarrow\qquad
  \begin{tikzpicture}[baseline]
    \node[tvertex,label={[glabel]above:$w$}] (w) at (0,0.75) {};
    \node[cvertex,label={[glabel]above:$x$}] (x) at (-1.6,0.75) {};
    \node[cvertex,label={[glabel]above:$y$}] (y) at (1.6,0.75) {};
    \draw[gedge] (w) -- (x);
    \draw[gedge] (w) -- (y);
    \draw[gedge] (x) -- ++(150:0.7); \draw[gedge] (x) -- ++(215:0.7);
    \draw[gedge] (y) -- ++(30:0.7);  \draw[gedge] (y) -- ++(325:0.7);
  \end{tikzpicture}
  \caption{Making a terminal suppressible: the triangle $sab$ is replaced by a
  single terminal $w$ joined to the two outer neighbours $x,y$.}
  \label{fig:reduction-triangle}
\end{figure}

\subsection{A comparison matching from a cubic tree}\label{sec:cubic-tree}

Throughout this section $H$ is a normalized graph as prepared in
\cref{sec:normalizations}: its terminal set is independent, and every terminal
has two distinct nonadjacent cubic neighbours.

For every $k\ge 3$ there is a tree $\mathcal T$ with exactly $k$ leaves and with
degree three at every nonleaf vertex. For example, begin with a claw at $k=3$,
and to add one leaf subdivide a leaf edge and attach a new leaf at the
subdivision vertex. The handshaking identity for a tree shows that $\mathcal T$
has exactly $k-2$ internal vertices.

Label the leaves of $\mathcal T$ by the $k$ terminals of $H$ and identify each
leaf with its corresponding terminal; all internal vertices of $\mathcal T$ are
new. Write
\[
  \widehat H=H\cup\mathcal T .
\]
This graph is simple and cubic. It is also bridgeless. Every old edge lies on an
old cycle. If $g\in E(\mathcal T)$, then each component of $\mathcal T-g$
contains a terminal leaf: otherwise one component would be a finite tree with no
vertex of degree one. Choose terminal leaves $u,v$ on opposite sides of $g$. A
simple $u$--$v$ path in $H$ and the unique $u$--$v$ path in $\mathcal T$ meet
only at $u,v$, since the internal tree vertices are new and no terminal leaf is
an internal vertex of a tree path. Their union is a cycle through $g$. Thus
every edge of $\widehat H$ lies on a cycle.

Choose a perfect matching $P$ of $\widehat H$ by \cref{thm:specified-edge}. Let
$I$ be the set of the $k-2$ new internal vertices, and set
\[
  r=\abs{P\cap E(\mathcal T[I])},
  \qquad
  p=\abs{\{e\in P:e\text{ joins a terminal to }I\}} .
\]
Counting how $P$ covers the internal vertices gives
\[
  2r+p=k-2 .
\]

Restrict $P$ to the original graph, $N=P\cap E(H)$. Every old cubic vertex is
covered by $N$. A terminal is exposed by $N$ exactly when its tree spoke belongs
to $P$, so precisely $p$ terminals are exposed. Since
$\abs{V(\widehat H)}=n+k-2$,
\begin{equation}\label{eq:N-size}
  \begin{aligned}
    \abs{N}
      &=\frac{n+k-2}{2}-(r+p)\\
      &=\frac{n-k+2}{2}+r=q+r .
  \end{aligned}
\end{equation}
The number of terminals covered by $N$ is
\begin{equation}\label{eq:covered-terminals}
  k-p=2+2r\ge 2 .
\end{equation}
No identity of an exposed terminal has been prescribed, and $r$ need not equal
one. The argument in \cref{sec:first-entry} selects a covered terminal only
after $N$ is known.

\subsection{First entry and the induction}\label{sec:first-entry}

\begin{lemma}\label{lem:first-entry}
  Let $M,N$ be matchings of a finite loopless graph $G$, let $Q$ be a component
  of $G-M$, and let $x\notin V(Q)$. Assume that
  \begin{itemize}
    \item $x$ is exposed by $M$ and covered by $N$;
    \item every other vertex exposed by $M$ belongs to $Q$; and
    \item every vertex exposed by $N$ belongs to $Q$.
  \end{itemize}
  Then there is a matching $M'$ with $\abs{M'}=\abs{M}$ such that the component
  of $G-M'$ containing $Q$ contains every vertex exposed by $M'$.
\end{lemma}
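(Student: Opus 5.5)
The plan is to take the symmetric difference $M\symdiff N$ and flip $M$ along an initial segment of the alternating path that starts at $x$. The segment is chosen to stop at the first vertex where that path meets $V(Q)$.

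\emph{Step 1: the path from $x$.} Every vertex has degree at most two in $M\symdiff N$, so its components are paths and cycles whose edges alternate between $M$ and $N$. The vertex $x$ is exposed by $M$ and covered by $N$, so it has degree exactly one in $M\symdiff N$. Hence it is an endpoint of a path component
\[
  x=v_0,v_1,\dots,v_m,
\]
whose edges alternate, starting with $v_0v_1\in N\setminus M$.

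The other endpoint $v_m\neq x$ is exposed by one of the two matchings:
\begin{itemize}
  \item if the last edge lies in $N$, then $v_m$ is exposed by $M$, and since $v_m\ne x$, the second hypothesis puts $v_m$ in $Q$;
  \item if the last edge lies in $M$, then $v_m$ is exposed by $N$, and the third hypothesis puts $v_m$ in $Q$.
\end{itemize}
Either way the path meets $V(Q)$. Let $d=v_\ell$ be its first vertex in $V(Q)$; we have $\ell\ge1$ because $x\notin V(Q)$.

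\emph{Step 2: the first entry is along an $M$-edge.} This is the key point. Suppose the edge $v_{\ell-1}v_\ell$ lay in $N\setminus M$. Then it would be an edge of $G-M$. Its end $v_\ell$ lies in the component $Q$, so $v_{\ell-1}$ would also lie in $Q$, contradicting the minimality of $\ell$. So $v_{\ell-1}v_\ell\in M$. Since the path alternates starting with an $N$-edge, $\ell=2j$ is even, and the prefix $R=v_0\cdots v_{2j}$ contains exactly $j$ edges of $N$ and $j$ edges of $M$.

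\emph{Step 3: the flip.} Set $M'=M\symdiff E(R)$. This is the standard augmentation along an alternating path whose start $x$ is $M$-exposed:
\begin{itemize}
  \item $x$ becomes covered by $v_0v_1$;
  \item each internal vertex of $R$ trades its $M$-edge on $R$ for its $N$-edge on $R$;
  \item $d$ loses its $M$-edge $v_{2j-1}d$ and gains nothing.
\end{itemize}
So $M'$ is a matching with $\abs{M'}=\abs{M}$. Its exposed vertices are exactly $d$ together with the $M$-exposed vertices other than $x$, and all of these lie in $V(Q)$.

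\emph{Step 4: $Q$ survives.} Passing from $G-M$ to $G-M'$ deletes only the $N$-edges $v_{2i}v_{2i+1}$ with $i<j$. Their endpoints have indices at most $2j-1$, so by the choice of $d$ they all lie outside $V(Q)$. Hence $E(Q)\subseteq E(G-M')$, and $Q$ is still connected in $G-M'$. The component of $G-M'$ containing $Q$ therefore contains every $M'$-exposed vertex, as required.

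I expect Step 2 to be the main obstacle: showing that the first entry into $Q$ happens along an $M$-edge. This is what forces the prefix to be balanced, so that $\abs{M'}=\abs{M}$, and it is also what keeps every edge of $Q$ in the new complement.
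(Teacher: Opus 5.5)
Your proof is correct and follows the paper's argument essentially step for step. Both take the alternating path of $M\symdiff N$ starting at $x$, show that its first entry into $Q$ must be along an $M$-edge (so the prefix is balanced), and flip along that prefix; your Step~4 just spells out a little more explicitly than the paper why no edge of $Q$ is lost.
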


\begin{proof}
  In $M\symdiff N$, the vertex $x$ has degree one, and its incident edge belongs
  to $N\setminus M$. Its component is therefore a simple alternating path. The
  other endpoint is exposed by exactly one of $M,N$, and hence belongs to $Q$ by
  the hypotheses.

  Traverse the path from $x$, and stop at its first vertex $d\in V(Q)$. The edge
  that enters $Q$ must belong to $M\setminus N$: if it lay outside $M$, it would
  be an edge of $G-M$ joining its preceding outside vertex to the component $Q$,
  contrary to the choice of $Q$. Thus the prefix from $x$ to $d$ begins with an
  $N\setminus M$ edge and ends with an $M\setminus N$ edge, and it contains
  equally many edges of the two matchings.

  Flip membership along this prefix. The result $M'$ is a matching of the same
  size as $M$; it covers $x$ and exposes $d$. The prefix meets $Q$ for the first
  time at $d$, so no complement edge internal to $Q$ is removed, and the final
  $M$-edge becomes a complement edge entering $Q$. Every old $M$-exposure except
  $x$ remains in $Q$, and the only new exposure is $d\in Q$. Hence all
  $M'$-exposures lie in the component containing $Q$.
\end{proof}

\medskip\noindent\textbf{The base case $k=2$.}
Let $S=\{s,t\}$. Add a new edge $e=st$, allowing it to be parallel to an
existing $st$-edge. The resulting graph $K$ is a cubic loopless multigraph, and
it is bridgeless: old edges remain on old cycles, and an $s$--$t$ path in $H$,
together with $e$, is a cycle (a two-cycle when the path is an existing
$st$-edge).

Choose an old edge $f$ incident with $s$. By \cref{thm:specified-edge}, $K$ has
a perfect matching $M$ containing $f$. The matching cannot also contain the adjacent new edge $e$, so
$M\subseteq E(H)$ and $M$ is a perfect matching of $H$. Its cardinality is
$n/2=q$. In $H-M$, the vertices $s,t$ have degree one and every other vertex has
degree two. Therefore $H-M$ is one $s$--$t$ path together with zero or more
cycles. This proves the theorem for $k=2$, including the case in which the
completion has parallel edges.

\medskip\noindent\textbf{The inductive step $k\ge 3$.}
Assume \cref{thm:terminal-selection} for all smaller terminal counts, and suppose a $k$-terminal
counterexample exists. Choose one of minimum order. The reductions of
\cref{sec:normalizations} apply: its terminals are independent and every
terminal has two nonadjacent cubic neighbours.

Build the cubic-tree comparison matching $N$ of \cref{sec:cubic-tree}. By
\cref{eq:covered-terminals}, at least two terminals are covered by $N$. Choose
one such terminal $s$, and denote its neighbours in $H$ by $a,b$. Suppress $s$:
delete $s$ and add the edge $e=ab$. Call the resulting graph $J$. The
normalization ensures that $J$ is simple and remains a $\{2,3\}$-graph with
$k-1$ terminals. It is connected and bridgeless: since either incident edge at
$s$ lies on a cycle, $H-s$ contains an $a$--$b$ path; every cycle avoiding $s$
survives; a cycle using the segment $asb$ becomes a cycle using $ab$; and the
new edge $ab$ itself lies on such a cycle. Its target size is
\[
  \frac{(n-1)-(k-1)+2}{2}=q .
\]

Apply the induction hypothesis to $J$. Let $M_J$ be the resulting matching and
$Q$ its tree component containing all $k-1$ terminals. By
\cref{lem:terminal-criterion}, all $k-3$ vertices
exposed by $M_J$ lie in $Q$.

Restore the path $a\,{-}\,s\,{-}\,b$. We turn $M_J$ into a size-$q$ matching $M$
of $H$, considering every state of $e$ (\cref{fig:restore-states}).
\begin{enumerate}
  \item If $e\notin M_J$ and $e\in E(Q)$, keep the matching and subdivide the
    tree edge $e$. All $k$ terminals then lie in one component, so
    \cref{lem:terminal-criterion} finishes the proof.
  \item If $e\notin M_J$ and $e$ lies on a residual cycle, keep the matching.
    The old tree $Q$ is unchanged, while the restored terminal $s$ is the unique
    new exposed vertex outside $Q$.
  \item If $e\in M_J$ and at least one endpoint, say $a$, belongs to $Q$,
    replace $e$ in the matching by $sb$. The complement edge $as$ attaches $s$
    to $Q$, and the newly exposed vertex $a$ also lies in $Q$. Again all
    terminals lie in one component and \cref{lem:terminal-criterion} finishes the proof.
  \item If $e\in M_J$ and neither endpoint belongs to $Q$, replace $e$ by either
    $as$ or $sb$. The old tree $Q$ remains unchanged. The endpoint not covered
    by the replacement is the unique new exposed vertex outside $Q$, and $s$
    also remains outside $Q$.
\end{enumerate}
The list is exhaustive: when $e\notin M_J$ it belongs either to $Q$ or to one
residual cycle; when $e\in M_J$ its endpoints meet $Q$ or they do not. Each
replacement preserves the matching cardinality $q$.

It remains to resolve the two unsuccessful restoration states, namely~(2)
and~(4). In either of them, write $x$ for the unique $M$-exposure outside $Q$.
Then
\begin{itemize}
  \item $Q$ contains the $k-1$ terminals other than $s$ and all other
    $M$-exposures;
  \item every $N$-exposure is a terminal different from $s$, because $s$ was
    chosen $N$-covered, and hence every $N$-exposure lies in $Q$; and
  \item $x$ is $N$-covered: in state~(2) it is the chosen terminal $s$, and in
    state~(4) it is an old cubic vertex, all of which are covered by $N$.
\end{itemize}
\Cref{lem:first-entry} therefore yields a
matching $M'$ with $\abs{M'}=\abs{M}=q$ such that the component $Q'$ of $H-M'$
containing $Q$ contains all $k-2$ exposed vertices. It also contains the $k-1$
old terminals. If it did not contain $s$, identity
\cref{eq:component-identity} would give
\[
  z(Q')-t(Q')=(k-2)-(k-1)=-1=2\cyc(Q')-2 ,
\]
which is impossible because the right-hand side is even. Hence $Q'$ contains all
$k$ terminals. Now \cref{eq:component-identity} gives $\cyc(Q')=0$, and
\cref{lem:terminal-criterion} says that all remaining components are
terminal-free cycles.

This contradiction completes the strong induction, and
\cref{thm:terminal-selection} holds for every $k\ge 2$.

\begin{remark}\label{rem:excess}
  The comparison matching may exceed the target: \cref{eq:N-size} allows
  $\abs{N}-\abs{M}=r$ with arbitrary $r\ge 0$. Only the balanced prefix ending
  at the first entry into $Q$ was flipped, and no equality between the global
  matching sizes was used. The first-entry argument is thus local and is
  unaffected by the excess $r$.
\end{remark}

\begin{figure}[ht]
  \centering
  \begin{subcaptionbox}{State (1): $e\in E(Q)$, kept.\label{fig:state1}}[0.46\linewidth]{
    \begin{tikzpicture}[baseline]
      \node[cvertex,label={[glabel]below:$a$}] (a) at (-0.9,0) {};
      \node[cvertex,label={[glabel]below:$b$}] (b) at (0.9,0) {};
      \node[tvertex,label={[glabel]above:$s$}] (s) at (0,0.6) {};
      \draw[gedge] (a) -- (s) -- (b);
      \draw[gedge] (a) -- ++(200:0.7); \draw[gedge] (b) -- ++(340:0.7);
      \node[glabel] at (0,-0.85) {$a\,s\,b$ path in tree};
    \end{tikzpicture}}
  \end{subcaptionbox}
  \hfill
  \begin{subcaptionbox}{State (2): $e$ on a residual cycle, kept.\label{fig:state2}}[0.46\linewidth]{
    \begin{tikzpicture}[baseline]
      \node[cvertex,label={[glabel]below:$a$}] (a) at (-0.9,0) {};
      \node[cvertex,label={[glabel]below:$b$}] (b) at (0.9,0) {};
      \node[tvertex,label={[glabel]above:$s$}] (s) at (0,0.6) {};
      \draw[gedge] (a) -- (s) -- (b);
      \draw[gedge] (a) to[bend right=45] (b);
      \node[glabel] at (0,-1.0) {$s$ exposed, outside $Q$};
    \end{tikzpicture}}
  \end{subcaptionbox}

  \medskip

  \begin{subcaptionbox}{State (3): $e\in M_J$, endpoint in $Q$.\label{fig:state3}}[0.46\linewidth]{
    \begin{tikzpicture}[baseline]
      \node[cvertex,label={[glabel]below:$a$}] (a) at (-0.9,0) {};
      \node[cvertex,label={[glabel]below:$b$}] (b) at (0.9,0) {};
      \node[tvertex,label={[glabel]above:$s$}] (s) at (0,0.6) {};
      \draw[gedge] (a) -- (s);
      \draw[medge] (s) -- (b);
      \draw[gedge] (a) -- ++(200:0.7);
      \node[glabel] at (0,-0.85) {match $sb$, attach $as$};
    \end{tikzpicture}}
  \end{subcaptionbox}
  \hfill
  \begin{subcaptionbox}{State (4): $e\in M_J$, neither endpoint in $Q$.\label{fig:state4}}[0.46\linewidth]{
    \begin{tikzpicture}[baseline]
      \node[cvertex,label={[glabel]below:$a$}] (a) at (-0.9,0) {};
      \node[cvertex,label={[glabel]below:$b$}] (b) at (0.9,0) {};
      \node[tvertex,label={[glabel]above:$s$}] (s) at (0,0.6) {};
      \draw[medge] (a) -- (s);
      \draw[gedge] (s) -- (b);
      \draw[gedge] (b) -- ++(340:0.7);
      \node[glabel] at (0,-0.85) {match $as$, $b$ exposed};
    \end{tikzpicture}}
  \end{subcaptionbox}
  \caption{The four states when restoring the suppressed path $a\,{-}\,s\,{-}\,b$.
  Heavy edges are matching edges. States (2) and (4) leave a single exposure $x$
  outside $Q$ and are resolved by \cref{lem:first-entry}.}
  \label{fig:restore-states}
\end{figure}

\section{Fragile graphs}\label{sec:fragile}

Suppose \cref{thm:simple-fragile} is false, and choose a counterexample $G$ with
the fewest vertices. A tree is
already a spanning-tree--matching decomposition, with empty matching, so $G$
contains a cycle.

First, $G$ has no bridge. If $b$ is a bridge and $G_1,G_2$ are the components of
$G-b$, then each $G_i$ is a smaller connected simple fragile subcubic graph. To
see fragility, a cycle whose edge deletion left $G_i$ connected would, after
restoring the other connected shore and the unchanged bridge, leave $G$
connected. By minimality, decompose each shore into a spanning tree $T_i$ and a
matching $M_i$. Then
\[
  T_1\cup\{b\}\cup T_2
\]
is a spanning tree of $G$, while $M_1\cup M_2$ is a matching because the shore
vertex sets are disjoint. Moreover $M_1\cup M_2$ is nonempty: if both $M_i$ were
empty, then both shores would be trees, and adjoining the bridge $b$ would make
$G$ itself a tree, contrary to the fact that $G$ contains a cycle. This is a
contradiction.

Thus $G$ is bridgeless. Since it is connected, cyclic and subcubic, all its
degrees are two or three. Put $k=\abs{\Vtwo(G)}$. Before applying
\cref{thm:terminal-selection}, we rule out $k=0$ and $k=1$.

If $k=0$, then $G$ is a simple cubic graph of even order $n\ge 4$. For any
$v\in V(G)$, let $c$ be the number of components of $G-v$. The sum of the
cyclomatic numbers of those components is
\[
  \left(\frac{3n}{2}-3\right)-(n-1)+c=\frac n2-2+c\ge 1 .
\]
Hence $G-v$ contains a cycle. Apply \cref{thm:protected-cycle} with $Q=\{v\}$.
Every vertex outside $Q$ has
ambient degree three, and the displayed cycle is in fact vertex-disjoint from
$Q$. The resulting removable cycle contradicts fragility.

If $k=1$, let $s$ be the unique terminal. Handshaking makes $n$ odd, and
simplicity forces $n\ge 5$. If $c$ is the number of components of $G-s$, the
total cyclomatic number is
\[
  \left(\frac{3n-1}{2}-2\right)-(n-1)+c=\frac{n-3}{2}+c\ge 1 .
\]
Thus $G-s$ contains a cycle. Protecting the connected one-vertex subgraph
$Q=\{s\}$ again satisfies the hypotheses of \cref{thm:protected-cycle}, because all vertices
outside $Q$ have ambient degree three. Fragility is again contradicted.

We therefore have $k\ge 2$. Apply \cref{thm:terminal-selection} to $G$. It
yields a matching $M$ such that $G-M$ consists of an all-terminal tree $Q$ and
some residual cycles. If a residual cycle exists, it is edge-disjoint (indeed
vertex-disjoint) from $Q$, and every vertex outside $Q$ has degree three in $G$,
because all degree-two vertices lie in $Q$. \Cref{thm:protected-cycle}
then supplies a cycle $C$ edge-disjoint from $Q$ for which $G-E(C)$ is
connected, contrary to fragility. Therefore there is no residual cycle: the tree
$Q$ is spanning, and $E(G)=E(Q)\dcup M$. This proves \cref{thm:simple-fragile}.

\section{The 2DC and 3DC conjectures}\label{sec:conjectures}

In this section graphs may have parallel edges but no loops, and a parallel pair
is a two-cycle.

\subsection{From the fragile theorem to 2DC}\label{sec:2dc-interface}

Let $\mathcal S_{2,3}$ be the class of finite connected loopless
multigraphs with every degree equal to two or three and every cycle separating.
The standard 2DC asks for a partition
\[
  E(G)=E(T)\dcup M,
\]
where $T$ is a spanning tree and $M$ is a nonempty matching.

For any $G\in\mathcal S_{2,3}$,
\[
  2\abs{E(G)}=2\abs{\Vtwo(G)}+3\abs{\Vthree(G)}=2\abs{V(G)}+\abs{\Vthree(G)},
\]
so every complementary spanning tree leaves
\begin{equation}\label{eq:matching-nonempty}
  \abs{E(G)}-\bigl(\abs{V(G)}-1\bigr)=\frac{\abs{\Vthree(G)}}{2}+1\ge 1
\end{equation}
edges in the matching part. The nonemptiness required by 2DC is therefore
automatic in this class.

Suppose 2DC were false, and choose a counterexample $G\in\mathcal S_{2,3}$
minimizing $\phi(G)=\abs{V(G)}+\abs{E(G)}$. By
\cref{thm:min-counterexample}, $G$ is simple and $2$-edge-connected,
hence a finite connected simple fragile subcubic graph.
\Cref{thm:simple-fragile} decomposes $G$ into a spanning tree and a
matching, a contradiction. This proves 2DC in its standard loopless-multigraph
form.

\subsection{From 2DC to 3DC}\label{sec:2dc-to-3dc}

The standard 3DC concerns finite connected cubic loopless multigraphs. A
$3$-decomposition is an edge partition
\[
  E(G)=E(T)\dcup E(C)\dcup M,
\]
where $T$ is a spanning tree, $C$ is a 2-regular subgraph, and $M$ is a
matching. The matching may be empty; the 2-regular part is necessarily nonempty
in a cubic graph, although this need not be imposed in the definition.

Let $G$ be a connected cubic loopless multigraph. Among all 2-regular subgraphs
$C$ for which
\[
  K:=G-E(C)
\]
is connected, choose one inclusion-maximal. The admissible family is nonempty
because the empty subgraph is admissible.

Let $H$ be the 2-core of $K$, obtained by repeatedly deleting vertices of
current degree at most one. If $H$ is empty, then $K$ is a connected forest and
hence a tree; $C$, that tree, and the empty matching already give a
$3$-decomposition.

Assume $H\ne\empt$. The 2-core is connected, since deleting a vertex of degree
at most one from a connected graph leaves the remaining nonempty graph
connected. Its degrees are two or three. Every vertex of $C$ has degree one in
$K$, because $G$ is cubic and $C$ uses exactly two of its incident edges; such a
vertex is deleted in the first pruning round, so
\begin{equation}\label{eq:VC-disjoint}
  V(C)\cap V(H)=\empt .
\end{equation}

The edges pruned from $K$ form a forest in which each tree component attaches to
$H$ exactly once. To see this, orient each deleted vertex's possible parent edge
toward its later-deleted neighbour. The deletion order precludes a directed
cycle, so the result is a forest terminating at $H$. If a component had two
attachment edges sharing a root in $H$, those two edges together with the forest
path between their outer endpoints would form a cycle, contradicting the deletion
of the outer vertex. Thus each pruned tree has exactly one attachment.

Every cycle of $H$ is separating in $H$. Otherwise, let $D$ be a cycle for which
$H-E(D)$ is connected. Restoring all rooted pruned trees shows that $K-E(D)$ is
connected. By \cref{eq:VC-disjoint}, $C\cup D$ is a strictly larger 2-regular
subgraph, and
\[
  G-E(C\cup D)=K-E(D)
\]
is connected, contrary to the maximal choice of $C$.

Thus $H\in\mathcal S_{2,3}$. Decompose $H$ into
a spanning tree $T_H$ and a matching $M$ by the result of
\cref{sec:2dc-interface}. Add every pruned forest edge to $T_H$. Because each
pruned tree has one attachment, the result is a spanning tree $T_K$ of $K$.
Consequently
\[
  E(G)=E(T_K)\dcup E(C)\dcup M
\]
is a $3$-decomposition of $G$, proving $\text{2DC}\Rightarrow\text{3DC}$. In any
such partition $C$ cannot be empty: otherwise the tree and matching would
contain at most $(\abs{V(G)}-1)+\abs{V(G)}/2<3\abs{V(G)}/2=\abs{E(G)}$ edges.

Together with \cref{sec:2dc-interface}, this proves 3DC for finite
connected cubic loopless multigraphs, establishing \cref{thm:3dc-resolved}.

\begin{remark}
  The converse implication also holds, giving the equivalence of
  Hoffmann-Ostenhof--Kaiser--Ozeki
  \cite[Proposition~14]{HoffmannOstenhofKaiserOzeki2018}. Given a 2DC instance
  $H$, attach to each degree-two vertex, by a bridge, a copy of the theta
  multigraph with one edge subdivided; the result is cubic and loopless. In any
  $3$-decomposition the attachment bridges lie in the tree, and each component
  of the $2$-regular part lies inside a theta copy, since a cycle through
  $E(H)$ stays separating while the $2$-regular part does not. Restricting the
  tree and matching to $H$ yields a $2$-decomposition, whose matching is
  nonempty by \cref{eq:matching-nonempty}.
\end{remark}

\section{Concluding remarks}\label{sec:scope}

\Cref{thm:terminal-selection} reduces 3DC to a question about matching
complements in simple subcubic graphs. We close with a question on the finer
structure of $3$-decompositions.

\begin{question}
  Fan, Guo, and Zhou~\cite{FanGuoZhou2026} showed that every connected cubic
  graph on $n$ vertices has a decomposition into a spanning tree, a 2-regular
  subgraph, and a collection of paths of length at most two, with at most
  $(n-4)/8$ paths of length two. Our result drives this count to zero. What is
  the extremal behaviour of other parameters of a $3$-decomposition---for
  instance, the minimum number of cycle components, or the maximum number of
  distinct $3$-decompositions as a function of $n$?
\end{question}

\section*{Acknowledgements}

The author thanks Luyi Li and Erling Wei for helpful discussions and
suggestions. The author used generative-AI tools during preliminary exploration
and language editing, and verified all mathematical statements and proofs, taking
full responsibility for the contents of the manuscript.


\begin{thebibliography}{99}

\bibitem{AbdolhosseiniAkbariHashemiMoradian2016}
F.~Abdolhosseini, S.~Akbari, H.~Hashemi, and M.~S. Moradian,
\emph{Hoffmann-Ostenhof's conjecture for traceable cubic graphs},
preprint (2016), \texttt{arXiv:1607.04768}.

\bibitem{AboomahigirAhanjidehAkbari2018}
E.~Aboomahigir, M.~Ahanjideh, and S.~Akbari,
\emph{Decomposing claw-free subcubic graphs and $4$-chordal subcubic graphs},
preprint (2018), \texttt{arXiv:1806.11009}.

\bibitem{BachtlerHeinrich2023}
O.~Bachtler and I.~Heinrich,
\emph{Reductions for the 3-Decomposition Conjecture},
Procedia Comput. Sci. \textbf{223} (2023), 96--103.

\bibitem{BachtlerKrumke2022}
O.~Bachtler and S.~O. Krumke,
\emph{Towards obtaining a 3-decomposition from a perfect matching},
Electron. J. Combin. \textbf{29} (2022), no.~4, Paper 4.23.

\bibitem{BorseWaphare2009}
Y.~M. Borse and B.~N. Waphare,
\emph{On removable cycles in connected graphs},
J. Indian Math. Soc. (N.S.) \textbf{76} (2009), no.~1--4, 31--46.

\bibitem{BorseWaphare2013}
Y.~M. Borse and B.~N. Waphare,
\emph{Removable cycles avoiding two connected subgraphs},
ISRN Discrete Math. \textbf{2013} (2013), Article ID 164535.

\bibitem{BotlerJimenezSambinelliWakabayashi2024}
F.~Botler, A.~Jim\'enez, M.~Sambinelli, and Y.~Wakabayashi,
\emph{On the structure of a smallest counterexample and a new class verifying
the 2-Decomposition Conjecture},
Graphs Combin. \textbf{40} (2024), Paper 102.

\bibitem{BoydIwataTakazawa2013}
S.~Boyd, S.~Iwata, and K.~Takazawa,
\emph{Finding 2-factors closer to TSP tours in cubic graphs},
SIAM J. Discrete Math. \textbf{27} (2013), no.~2, 918--939.

\bibitem{FanGuoZhou2026}
G.~Fan, S.~Guo, and C.~Zhou,
\emph{The 3-Decomposition Conjecture of cubic graphs},
Sci. China Math. \textbf{69} (2026), no.~8, 2239--2248.

\bibitem{FanZhou2025}
G.~Fan and C.~Zhou,
\emph{Hoffmann-Ostenhof's 3-decomposition conjecture},
Discrete Math. \textbf{348} (2025), no.~7, Article 114454.

\bibitem{HoffmannOstenhof2011}
A.~Hoffmann-Ostenhof,
\emph{Nowhere-zero flows and structures in cubic graphs},
Ph.D. thesis, University of Vienna, 2011.

\bibitem{HoffmannOstenhofKaiserOzeki2018}
A.~Hoffmann-Ostenhof, T.~Kaiser, and K.~Ozeki,
\emph{Decomposing planar cubic graphs},
J. Graph Theory \textbf{88} (2018), no.~4, 631--640.

\bibitem{HongLiuYu2020}
Y.~Hong, Q.~Liu, and N.~Yu,
\emph{Edge decomposition of connected claw-free cubic graphs},
Discrete Appl. Math. \textbf{284} (2020), 246--250.

\bibitem{LiCui2014}
R.~Li and Q.~Cui,
\emph{Spanning trees in subcubic graphs},
Ars Combin. \textbf{117} (2014), 411--415.

\bibitem{LiuLi2020}
P.~Li and W.~Liu,
\emph{Decompositions of cubic traceable graphs},
Discuss. Math. Graph Theory \textbf{40} (2020), no.~1, 35--49.

\bibitem{LyngsieMerker2019}
K.~S. Lyngsie and M.~Merker,
\emph{Decomposing graphs into a spanning tree, an even graph, and a star
forest},
Electron. J. Combin. \textbf{26} (2019), no.~1, Paper 1.33.

\bibitem{Malkevitch1979}
J.~Malkevitch,
\emph{Spanning trees in polytopal graphs},
Ann. New York Acad. Sci. \textbf{319} (1979), 362--367.

\bibitem{OzekiYe2016}
K.~Ozeki and D.~Ye,
\emph{Decomposing plane cubic graphs},
European J. Combin. \textbf{52} (2016), 40--46.

\bibitem{Petersen1891}
J.~Petersen,
\emph{Die Theorie der regul\"aren Graphs},
Acta Math. \textbf{15} (1891), 193--220.

\bibitem{ProttiSouza2018}
F.~Protti and U.~S. Souza,
\emph{Decycling a graph by the removal of a matching: new algorithmic and
structural aspects in some classes of graphs},
Discrete Math. Theor. Comput. Sci. \textbf{20} (2018), no.~2, Paper 15.

\bibitem{Schonberger1935}
T.~Sch\"onberger,
\emph{Ein Beweis des Petersenschen Graphensatzes},
Acta Sci. Math. (Szeged) \textbf{7} (1935), 51--57.

\bibitem{XieZhouZhou2020}
M.~Xie, C.~Zhou, and S.~Zhou,
\emph{Decomposition of cubic graphs with a 2-factor consisting of three cycles},
Discrete Math. \textbf{343} (2020), 111839.

\bibitem{ZhangSzeider2025}
T.~Zhang and S.~Szeider,
\emph{The 3-Decomposition Conjecture: a SAT-based approach with specialized
propagators},
in \emph{31st International Conference on Principles and Practice of Constraint
Programming (CP 2025)}, LIPIcs \textbf{340}, 2025, pp. 39:1--39:19.

\end{thebibliography}
\end{document}